%%%%%%%%%%%%%%%%%%%%%%%%%%%%%%%%%%%%%%%%%%%%%%%%%%%%%%%%%%%%
% Counting chambers in restricted Coxeter arrangements
%
% authors: Moeller, Roehrle
% latex2e-file
% version 27.03.17 (GR)
% version 23.03.17 (GR)
% version 22.03.17 (TM)
% version 28.12.16 (GR)
% version 19.12.16 (TM)
% version 05.09.16 (GR)
% version 13.08.16 (GR)
% version 18.08.16 (GR)
%%%%%%%%%%%%%%%%%%%%%%%%%%%%%%%%%%%%%%%%%%%%%%%%%%%%%%%%%%%%
\documentclass[12pt]{amsart}
\usepackage{amsmath,amssymb,amsthm,graphics,amscd,mathrsfs}
\usepackage{amscd, amsfonts, amssymb, amsthm}
\usepackage{parskip, fullpage, verbatim}
\usepackage[colorlinks, breaklinks, linkcolor=blue]{hyperref} 
\usepackage{breakurl}
\usepackage{array}
\usepackage{latexsym}
\usepackage{enumerate}

%%%%%%%%%%%%%%%%%%%%%%%%%%%%%%%%%%%%%%%%%%%%%%%%%%%%%%%%%%%%%%%%%%%%%%
%%%%%%%%%%%%% Math macros
%%%%%%%%%%%%%%%%%%%%%%%%%%%%%%%%%%%%%%%%%%%%%%%%%%%%%%%%%%%%%%%%%%%%%%

%%%%%%%%% fancy letters
\newcommand\CA{{\mathscr A}}

\renewcommand\CD{{\mathscr D}}

\newcommand\CS{{\mathcal S}}

\newcommand\RR{{\mathscr R}}

\newcommand\BBR{{\mathbb R}}
\newcommand\BBZ{{\mathbb Z}}

\newcommand {\Sage}{\textsf{SAGE}}

%%%%%%%%% operators

\newcommand\codim{\operatorname{codim}}

\newcommand\rk{\operatorname{rk}}

%%%%%%%% general...

%%%%%%% local 

%%%%%%%%%%%%%%%%%%%%%%%%%%%%%%%%%%%%%%%%%%%%%%%%%%%%%%%%%%%%%%%%%%%%%%
%%%%%%%%%%%%% theorem-related defs
%%%%%%%%%%%%%%%%%%%%%%%%%%%%%%%%%%%%%%%%%%%%%%%%%%%%%%%%%%%%%%%%%%%%%%

\numberwithin{equation}{section}

\theoremstyle{plain}
%\swapnumbers
\newtheorem{lemma}[equation]{Lemma}
\newtheorem{theorem}[equation]{Theorem}

\theoremstyle{definition}
\newtheorem{defn}[equation]{Definition}
\newtheorem{remark}[equation]{Remark}
\newtheorem{remarks}[equation]{Remarks}

%%%%%%%%%%%%%%%%%%%%%%%%%%%%%%%%%%%%%%%%%%%%%%%%%%%%%%%%%%%%%%%%%%%%%%
%%%%%%%%%%%%% end of preamble
%%%%%%%%%%%%%%%%%%%%%%%%%%%%%%%%%%%%%%%%%%%%%%%%%%%%%%%%%%%%%%%%%%%%%%

\subjclass[2010]{20F55, 52B30, 52C35, 14N20}

\begin{document}

%%%%%%%%%%%%%%%%%%%%%%%%%%%%%%%%%%%%%%%%%%%%%%%%%%%%%%%%%%%%%%%%%%%%%%
%%%%%%%%%%%%% top matter stuff
%%%%%%%%%%%%%%%%%%%%%%%%%%%%%%%%%%%%%%%%%%%%%%%%%%%%%%%%%%%%%%%%%%%%%%
\title[Counting chambers in restricted Coxeter arrangements]
{Counting chambers in restricted Coxeter arrangements}

\author[T. M\"oller]{Tilman M\"oller}
\address
{Fakult\"at f\"ur Mathematik,
Ruhr-Universit\"at Bochum,
D-44780 Bochum, Germany}
\email{tilman.moeller@rub.de}

\author[G. R\"ohrle]{Gerhard R\"ohrle}
%\address
%{Fakult\"at f\"ur Mathematik
%Ruhr-Universit\"at Bochum,
%D-44780 Bochum, Germany}
\email{gerhard.roehrle@rub.de}

\keywords{
Coxeter arrangement, 
restriction of a Coxeter arrangement,
poset of regions of a real arrangement,
factorization of rank-generating function}

\allowdisplaybreaks

\begin{abstract}
Solomon showed that 
the Poincar\'e polynomial of a Coxeter group $W$
satisfies a product 
decomposition depending on the exponents of $W$.
This polynomial coincides with the 
rank-generating function of the poset of regions of
the underlying Coxeter arrangement.
In this note we determine all instances when 
the analogous factorization property of 
the rank-generating function of the poset of 
regions holds for a restriction of a
Coxeter arrangement.
It turns out that this is always the case 
with the exception of some instances in type $E_8$.
\end{abstract}

\maketitle

%%%%%%%%%%%%%%%%%%%%%%%%%%%%%%%%%%%%%%%%%%%%%%%%%%%%%%%%%%%%%%%%%%%%%%
%%%%%%%%%%%%% article body...
%%%%%%%%%%%%%%%%%%%%%%%%%%%%%%%%%%%%%%%%%%%%%%%%%%%%%%%%%%%%%%%%%%%%%%

%%%%%%%%%%%%%%%%%%%%%%%%%%%%%%%%%%%%%%%%%%%%%%%%%%%%%%%%%%%%%%%%%%%%%%
%%%%%%%%%%%%% \S1 Introduction
%%%%%%%%%%%%%%%%%%%%%%%%%%%%%%%%%%%%%%%%%%%%%%%%%%%%%%%%%%%%%%%%%%%%%%
\section{Introduction}

Much of the motivation 
for the study of arrangements 
of hyperplanes comes 
from Coxeter arrangements. 
They consist of the reflecting hyperplanes 
associated with the 
reflections of the underlying Coxeter group.
Solomon showed that 
the Poincar\'e polynomial $W(t)$
of a Coxeter group $W$ satisfies a product 
decomposition depending on the exponents of $W$, see \eqref{eq:solomon}.
This polynomial coincides with the 
rank-generating function of the poset of regions of
the underlying Coxeter arrangement, see \S \ref{s:rankgenerating}.
The aim of this note is to 
classify all cases when 
the analogous factorization property of 
the rank-generating function of the poset of 
regions holds for an arbitrary restriction of a
Coxeter arrangement.
It turns out that this is always the case 
with the exception of some instances in type $E_8$,
see Theorem \ref{thm:main}.

The analogous factorization property for 
a localization of a Coxeter arrangement
is an immediate consequence of Solomon's theorem
and a theorem of Steinberg \cite[Thm.~1.5]{steinberg:invariants}, 
see Remark \ref{rems:thmmain}(iv).

\subsection{The Poincar\'e polynomial of a Coxeter group}
\label{ssec:coxeter}
Let $(W,S)$ be a Coxeter group with a distinguished set of generators, $S$,   
see \cite{bourbaki:groupes}. Let $\ell$ be the length function 
of $W$ with respect to $S$.
The \emph{Poincar\'e polynomial} $W(t)$ of 
the Coxeter group $W$ is the polynomial in $\BBZ[t]$ defined by 
\begin{equation}
\label{eq:poncarecoxeter}
W(t) := \sum_{w \in W} t^{\ell(w)}.
\end{equation}

The following factorization of 
$W(t)$ is due to Solomon \cite{solomon:chevalley}:
\begin{equation}
\label{eq:solomon}
W(t) = \prod_{i=1}^n(1 + t + \ldots + t^{e_i}),
\end{equation}
where $\{e_1, \ldots, e_n\}$ is the 
set of exponents of $W$.
See also Macdonald \cite{macdonald:coxeter}.

\subsection{The rank-generating function of the posets of regions}
\label{s:rankgenerating}

Let $\CA = (\CA,V)$ be a 
hyperplane arrangement in the real vector space $V=\BBR^n$. 
A \emph{region} of $\CA$ is a connected component of the 
complement $V \setminus \cup_{H \in \CA}H$ of $\CA$.
Let $\RR := \RR(\CA)$ be the set of regions of $\CA$.
For $R, R' \in \RR$, we let $\CS(R,R')$ denote the 
set of hyperplanes in $\CA$ separating $R$ and $R'$.
Then with respect to a choice of a fixed 
base region $B$ in $\RR$, we can partially order
$\RR$ as follows:
\[
R \le R' \quad \text{ if } \quad \CS(B,R) \subseteq \CS(B,R').
\]
Endowed with this partial order, we call $\RR$ the
\emph{poset of regions of $\CA$ (with respect to $B$)} and denote it by
$P(\CA, B)$. This is a ranked poset of finite rank,
where $\rk(R) := |\CS(B,R)|$, for $R$ a region of $\CA$, 
\cite[Prop.~1.1]{edelman:regions}.
The \emph{rank-generating function} of $P(\CA, B)$ is 
defined to be the following polynomial in 
$\BBZ[t]$
\begin{equation*}
\label{eq:rankgen}
\zeta(P(\CA,B), t) := \sum_{R \in \RR}t^{\rk(R)}. 
\end{equation*}

\bigskip
Let $W = (W,S)$ be a Coxeter group with associated reflection arrangement 
$\CA = \CA(W)$ which consists of the reflecting hyperplanes of 
the reflections in $W$ in the real space $V=\BBR^n$, where $|S| = n$. 
Note that the Poincar\'e polynomial $W(t)$
associated with $W$ 
given in \eqref{eq:poncarecoxeter} 
coincides with the rank-generating function of the poset of regions of 
the underlying reflection arrangement 
$\CA(W)$ with respect to $B$ being the dominant Weyl chamber of $W$
in $V$; 
see \cite{bjoerneredelmanziegler} or \cite{jambuparis:factored}.

\bigskip

Thanks to work of  Bj\"orner, Edelman, and Ziegler 
\cite[Thm.~4.4]{bjoerneredelmanziegler}
(see also Paris \cite{paris:counting}), respectively  
Jambu and Paris \cite[Prop.~3.4, Thm.~6.1]{jambuparis:factored},
in case of a real arrangement $\CA$  
which is supersolvable (see see \S \ref{ssect:supersolv}), 
respectively inductively factored (see \S \ref{ssect:factored}), 
there always exists a suitable base region $B$ so that 
$\zeta(P(\CA,B), t)$
admits a multiplicative decomposition which 
is equivalent to \eqref{eq:solomon}
determined by the exponents of $\CA$, 
see Theorem \ref{thm:mult-zeta}.

\subsection{Restricted Coxeter arrangements}
\label{s:restrictedCoxeter}

Let $W$ be a Coxeter group with reflection arrangement 
$\CA = \CA(W)$ in $V=\BBR^n$.
We consider the following generalization of the 
Poincar\'e polynomial $W(t)$ of $W$.
Let $X$ be in the intersection lattice $L(\CA)$ of $\CA$,
i.e.~$X$ is the subspace in $V$ given by the intersection 
of some hyperplanes in $\CA$.
Then we can consider the restricted arrangement 
$\CA^X$ which is the induced arrangement in $X$ from $\CA$, 
see \S \ref{ssect:arrangements}. 
In a case-by-case study, 
Orlik and Terao showed in \cite{orlikterao:free} that the restricted 
arrangement 
$\CA^X$ is always free, so we can speak of the exponents of $\CA^X$,
see \cite[\S 4]{orlikterao:arrangements}.
In case $W$ is a Weyl group, Douglass \cite[Cor.~6.1]{douglass:adjoint}
gave a uniform proof
of this fact by means of an elegant, conceptual Lie theoretic argument.

It follows from the discussion above that in the special instances
when either $\CA^X$ is supersolvable 
(which is for instance always the case for $X$ of dimension at most $2$)
or inductively factored, or else if $X$ is just the ambient space $V$
(so that $\CA^V = \CA)$, then 
$\zeta(P(\CA^X,B), t)$ is known to factor analogous to 
\eqref{eq:solomon} involving the exponents of $\CA^X$. 

Fadell and Neuwirth \cite{fadellneuwirth}
showed that the braid arrangement 
is fiber type and Brieskorn \cite{brieskorn:tresses} 
proved this for the reflection arrangement of the
hyperoctahedral group. This property is equivalent to 
being supersolvable, see \cite{terao:modular}. 
Therefore, since any restriction of a supersolvable 
arrangement is again supersolvable, \cite{stanley:super}, in
case of the symmetric or hyperoctahedral group $W$, 
we see that $\CA(W)^X$ is supersolvable for any $X$.
Thus in each of these cases the rank 
generating function of the poset of regions of
 $\CA(W)^X$ factors as in \eqref{eq:solomon},
thanks to Theorem \ref{thm:mult-zeta}.

Therefore, 
it is natural to study the rank-generating function 
of the poset of regions of an arbitrary restriction of a
Coxeter arrangement. 
The following gives a complete classification of all instances
when $\zeta(P(\CA^X,B), t)$ factors analogous to 
\eqref{eq:solomon}.

\begin{theorem}
\label{thm:main}
Let $W$ be a finite, irreducible Coxeter group with 
reflection arrangement $\CA = \CA(W)$.
Let $\CA^X$ be the restricted arrangement 
associated with $X \in L(\CA)\setminus\{V\}$.
Then there is a suitable choice of a base region $B$ so that 
the rank-generating function of the poset of regions of $\CA^X$
satisfies the multiplicative formula
\begin{equation}
\label{eq:poinprod}
\zeta(P(\CA^X,B), t) = \prod_{i=1}^n (1 + t + \ldots + t^{e_i}),
\end{equation}
where $\{e_1, \ldots, e_n\}$ is the 
set of exponents of $\CA^X$ 
if and only if one of the following holds:
\begin{itemize}
\item[(i)] $W$ is not of type $E_8$;
\item[(ii)] $W$ is of type $E_8$ and either the rank of $X$ is at most $3$,  
but $\CA^X \not \cong (E_8,A_2A_3)$ and $\CA^X \not \cong (E_8,A_1A_4)$,
or else $\CA^X \cong (E_8,D_4)$.
\end{itemize}
\end{theorem}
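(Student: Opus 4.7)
The plan is to reduce Theorem \ref{thm:main} to a finite, case-by-case verification based on the Orlik--Solomon classification of restricted reflection arrangements $\CA(W)^X$ up to isomorphism, together with the Orlik--Terao tables of exponents of $\CA^X$. For each irreducible Coxeter group $W$ and each isomorphism class of restriction $\CA^X$ with $X \in L(\CA) \setminus \{V\}$, one either invokes Theorem \ref{thm:mult-zeta} (applicable whenever $\CA^X$ is supersolvable or inductively factored) or computes $\zeta(P(\CA^X, B), t)$ directly and tests it against the explicit candidate factorization determined by the exponents.

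For the positive direction, types $A_n$ and $B_n$ are already settled in \S\ref{s:restrictedCoxeter}, since every restriction there is supersolvable. For the remaining non-$E_8$ types $D_n, E_6, E_7, F_4, H_3, H_4, I_2(m)$, the list of restriction isomorphism types is short; restrictions of rank at most $2$ are automatically supersolvable, so only higher-rank restrictions require individual attention. For each such case, we exhibit either a modular maximal chain in $L(\CA^X)$ or an inductive factorization, whereupon Theorem \ref{thm:mult-zeta} yields \eqref{eq:poinprod}. The $E_8$-cases in clause (ii) are handled analogously: the rank $2$ restrictions are automatic, the rank $3$ restrictions other than $(E_8, A_2A_3)$ and $(E_8, A_1A_4)$ are supersolvable, and for the borderline $(E_8, D_4)$ one either exhibits an inductive factorization or verifies \eqref{eq:poinprod} by a direct computation of the poset of regions.

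For the negative direction one must show, for $W = E_8$ and every restriction not covered by (ii) --- namely $(E_8, A_2 A_3)$, $(E_8, A_1 A_4)$, and every restriction of rank at least $4$ other than $(E_8, D_4)$ --- that $\zeta(P(\CA^X, B), t)$ fails to match \eqref{eq:poinprod} for \emph{every} choice of base region $B$. This is carried out with computer algebra (\textsf{SAGE} and \textsf{GAP}/\textsf{CHEVIE}): enumerate the hyperplanes and regions of $\CA^X$, select one representative from each $W^X$-orbit of chambers as a candidate base region, compute $\zeta(P(\CA^X, B), t)$ in each case, and compare with the finite list of candidate factorizations pinned down by the exponents of $\CA^X$.

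The main obstacle is the last step. The posets of regions for the larger $E_8$ restrictions are sizeable, and because $\zeta(P(\CA^X, B), t)$ genuinely depends on $B$, ruling out factorization requires traversing \emph{all} $W^X$-orbits of chambers rather than a single representative --- one must certify that no base region produces the desired product. Organising the bookkeeping so that the exceptional list in (ii) drops out cleanly, and keeping the computations tractable, is where the bulk of the work concentrates; the finiteness of the Orlik--Solomon classification is what ultimately makes the procedure systematic and conclusive.
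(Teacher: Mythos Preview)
Your proposal has a genuine gap in the type $D_n$ case. You write that for the non-$E_8$ classical and exceptional types ``the list of restriction isomorphism types is short'' and that ``for each such case, we exhibit either a modular maximal chain in $L(\CA^X)$ or an inductive factorization, whereupon Theorem~\ref{thm:mult-zeta} yields \eqref{eq:poinprod}.'' For $W$ of type $D_n$ both assertions fail. The restrictions $\CA(D_n)^X$ are the arrangements $\CD_p^k$ with $0\le k\le p$, and as $n$ ranges over all integers $\ge 4$ this is an \emph{infinite} family. More importantly, for $1\le k\le p-3$ the arrangement $\CD_p^k$ is neither supersolvable nor inductively factored (this is precisely the content of Theorems~\ref{thm:super-restriction} and \ref{thm:nice-restriction}), so Theorem~\ref{thm:mult-zeta} does not apply and no modular chain or inductive factorization exists to exhibit. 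These cases cannot be absorbed into a finite computer check either. The paper handles them by a separate combinatorial argument: an explicit parametrisation of the regions of $\CD_p^k$, a recursive relation expressing $\zeta(P(\CD_p^k,B_p),t)$ in terms of $\zeta(P(\CD_{p-1}^{k},B_{p-1}),t)$ and $\zeta(P(\CD_{p-1}^{k-1},B_{p-1}),t)$, and an inductive polynomial identity that closes the recursion to the desired product $F(\exp(\CD_p^k))$. This is the substantive new ingredient, and your plan does not account for it.

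A secondary point: even among the exceptional types with $W\neq E_8$, many restrictions of rank $\ge 3$ (for instance $(E_6,A_1)$, $(E_7,A_1)$, $(F_4,A_1)$, and several $H_4$ restrictions) are \emph{not} inductively factored according to Theorems~\ref{thm:super-restriction} and \ref{thm:nice-restriction}, so your phrase ``exhibit \ldots\ an inductive factorization'' is overstated there as well. The paper treats all $31$ such exceptional restrictions by direct computation of $\zeta(P(\CA^X,B),t)$, which is consistent with the fallback you mention but not with your primary claim. For the negative direction in $E_8$ your outline matches the paper's approach; the reduction to finitely many base-region candidates is carried out via restricted root systems and the action of the restricted Weyl group rather than a generic $W^X$-orbit argument.
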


We prove Theorem \ref{thm:main} in Section \ref{sec:proof}.
For classical $W$, either $\CA(W)^X$ is supersolvable and 
the result follows from Theorem \ref{thm:mult-zeta},  
or else $W$ is of type $D$ and 
$\CA(W)^X$ belongs to a particular family of arrangements
$\CD_p^k$ for $0 \le k \le p$ studied by Jambu and Terao, 
\cite[Ex.~2.6]{jambuterao:free}.
We prove Theorem \ref{thm:main} for the family
$\CD_p^k$ in Lemma \ref{lem:dn}.

For $W$ of exceptional type, 
there are $31$ restrictions $\CA(W)^X$ of rank at least $3$
(up to isomorphism)
that need to be considered. These are handled by 
computational means, see Remark \ref{rem:exc}.

\begin{remarks}
\label{rems:thmmain}
(i). 
In the statement of the theorem and later on
we use the convention to label the $W$-orbit 
of $X \in L(\CA)$ by the Dynkin type $T$ 
of the stabilizer $W_X$ of $X$ in $W$ which 
is itself a Coxeter group, 
by Steinberg's theorem \cite[Thm.~1.5]{steinberg:invariants}.
So we denote the restriction $\CA^X$ just by the pair
$(W,T)$; see also \cite[App.~C, D]{orlikterao:arrangements}.

(ii).
Among the restrictions $\CA(W)^X$ all  
supersolvable and all inductively factored instances are known, 
see Theorems \ref{thm:super-restriction} and 
\ref{thm:nice-restriction} below.
Thus, by Theorem \ref{thm:mult-zeta}, in each of these cases 
$\zeta(P(\CA^X,B), t)$
factors as in \eqref{eq:poinprod}.

(iii).
Hoge checked that the exceptional case  
$(E_8,A_2A_3)$ from Theorem \ref{thm:main}
is isomorphic to 
the real simplicial arrangement 
``$A_4(17)$'' from Gr\"unbaum's list \cite{gruenbaum}.
It was observed by Terao that the latter does 
not satisfy the product rule \eqref{eq:poinprod}, 
\cite[p.~277]{bjoerneredelmanziegler}.
It is rather remarkable that this arrangement
makes an appearance as a restricted Coxeter arrangement.  
In contrast, according to Theorem \ref{thm:main}, 
the rank-generating function of the poset of regions
of $(E_8,A_1^2A_3)$ does factor according to
\eqref{eq:poinprod}. In particular, 
these two arrangements are not isomorphic, as 
claimed erroneously in \cite[App.~D]{orlikterao:arrangements}.

(iv).
For  $X$ in $L(\CA(W))$ consider  
the localization $\CA(W)_X$ of $\CA(W)$ at $X$, 
which consists of all members of $\CA(W)$ containing $X$,
see \S \ref{ssect:arrangements}.
Then, since 
the stabilizer $W_X$  in $W$ of $X$
is itself a Coxeter group,
by Steinberg's theorem \cite[Thm.~1.5]{steinberg:invariants}, 
and since $\CA(W)_X = \CA(W_X)$, 
by \cite[Cor.~6.28(2)]{orlikterao:arrangements}, 
it follows from Solomon's factorization \eqref{eq:solomon}
that the rank generating function of the poset of regions of
$\CA(W)_X$ 
(with respect to the base chamber being the unique chamber of 
$\CA(W)_X$ containing the dominant Weyl chamber of $W$) factors 
analogous to \eqref{eq:solomon} involving 
the exponents of $W_X$.

(v).
In Lie theoretic terms, 
for $W$ a Weyl group, 
$W(t^2)$ is the Poincar\'e polynomial 
of the flag variety of a semisimple
linear algebraic group with Weyl group $W$.
The  formula \eqref{eq:solomon} then gives a well-known 
factorization of the Poincar\'e polynomial of the flag variety.

If $W$ is of type $A$ or $B$, then 
each restriction $\CA(W)^X$ is
the Coxeter arrangement of the same Dynkin type of 
smaller rank, 
cf.~\cite[Props.~6.73, 6.77]{orlikterao:arrangements}.
Thus, by the previous paragraph, in these instances, 
$\zeta(P(\CA^X,B), t^2)$ 
is just the Poincar\'e polynomial  
of the flag variety of a semisimple 
linear algebraic group of the same Dynkin type as $W$ but of smaller rank.

In view of these examples, it is natural to wonder whether in general 
there is a suitable projective variety 
associated with a fixed semisimple group $G$ with Weyl group $W$
whose Poincar\'e polynomial is related to 
the rank-generating function of the poset of 
regions for any restriction of $\CA(W)$ in the same manner 
as in these special instances above, relating to 
and generalizing the flag variety of $G$.
\end{remarks}

For general information about arrangements and Coxeter groups,
we refer the reader to \cite{bourbaki:groupes} and 
\cite{orlikterao:arrangements}.

\section{Recollections and Preliminaries}
\label{sect:prelims}

\subsection{Hyperplane arrangements}
\label{ssect:arrangements}
Let $V = \BBR^n$ be an $n$-dimensional real vector space.
A \emph{(real) hyperplane arrangement} $\CA = (\CA, V)$ in $V$ 
is a finite collection of hyperplanes in $V$ each 
containing the origin of $V$.
We denote the empty arrangement in $V$ by $\Phi_n$.

The \emph{lattice} $L(\CA)$ of $\CA$ is the set of subspaces of $V$ of
the form $H_1\cap \ldots \cap H_i$ where $\{ H_1, \ldots, H_i\}$ is a subset
of $\CA$. 
For $X \in L(\CA)$, we have two associated arrangements, 
firstly
$\CA_X :=\{H \in \CA \mid X \subseteq H\} \subseteq \CA$,
the \emph{localization of $\CA$ at $X$}, 
and secondly, 
the \emph{restriction of $\CA$ to $X$}, $\CA^X = (\CA^X,X)$, where 
$\CA^X := \{ X \cap H \mid H \in \CA \setminus \CA_X\}$.
Note that $V$ belongs to $L(\CA)$
as the intersection of the empty 
collection of hyperplanes and $\CA^V = \CA$. 
The lattice $L(\CA)$ is a partially ordered set by reverse inclusion:
$X \le Y$ provided $Y \subseteq X$ for $X,Y \in L(\CA)$.

Throughout, we only consider arrangements $\CA$
such that $0 \in H$ for each $H$ in $\CA$.
These are called \emph{central}.
In that case the \emph{center} 
$T(\CA) := \cap_{H \in \CA} H$ of $\CA$ is the unique
maximal element in $L(\CA)$  with respect
to the partial order.
A \emph{rank} function on $L(\CA)$
is given by $r(X) := \codim_V(X)$.
The \emph{rank} of $\CA$ 
is defined as $r(\CA) := r(T(\CA))$.

\subsection{Free arrangements}
\label{ssect:free}

Free arrangements play a fundamental role in the 
theory of hyperplane arrangements, 
see \cite[\S 4]{orlikterao:arrangements} for the definition and
properties of this notion. Crucial for our purpose is the fact that 
associated with a free arrangement is a set of important invariants, its 
(multi)set of \emph{exponents}, denoted by $\exp \CA$.

\subsection{Supersolvable arrangements}
\label{ssect:supersolv}

We say that $X \in L(\CA)$ is \emph{modular}
provided $X + Y \in L(\CA)$ for every $Y \in L(\CA)$,
\cite[Cor.~2.26]{orlikterao:arrangements}.

\begin{defn}
[{\cite{stanley:super}}]
\label{def:super}
Let $\CA$ be a central arrangement of rank $r$.
We say that $\CA$ is 
\emph{supersolvable} 
provided there is a maximal chain
\[
V = X_0 < X_0 < \ldots < X_{r-1} < X_r = T(\CA)
\]
 of modular elements $X_i$ in $L(\CA)$,
cf.~\cite[Def.~2.32]{orlikterao:arrangements}.
\end{defn}

\noindent Note that arrangements of rank at most $2$ are always supersolvable, 
e.g.~see \cite[Prop.~4.29(iv)]{orlikterao:arrangements} and 
supersolvable arrangements are always free, e.g.~see 
\cite[Thm.~4.58]{orlikterao:arrangements}.
Also, restrictions of a supersolvable 
arrangement are again supersolvable, \cite[Prop.~3.2]{stanley:super}.

\subsection{Nice and inductively factored arrangements}
\label{ssect:factored}

The notion of a \emph{nice} or \emph{factored} 
arrangement is due to Terao \cite{terao:factored}.
It generalizes the concept of a supersolvable arrangement, e.g.~see
\cite[Prop.~2.67, Thm.~3.81]{orlikterao:arrangements}.
Terao's main motivation was to give a 
general combinatorial framework to 
deduce tensor factorizations of the underlying Orlik-Solomon algebra,
see also \cite[\S 3.3]{orlikterao:arrangements}.
We refer to \cite{terao:factored} for the relevant 
notions and properties
(cf.~ \cite[\S 2.3]{orlikterao:arrangements}).

There is an  analogue of Terao's 
Addition Deletion Theorem for 
free arrangements 
(\cite[Thm.~4.51]{orlikterao:arrangements}) 
for the class of 
nice arrangements, see \cite[Thm.~3.5]{hogeroehrle:factored}.
In analogy to the case of free arrangements, this motivates
the notion of an 
\emph{inductively factored} arrangement, 
see \cite{jambuparis:factored}, \cite[Def.~3.8]{hogeroehrle:factored}
for further details on this concept.

The connection with the previous notions is as follows.
Supersolvable arrangements are always inductively factored 
(\cite[Prop.~3.11]{hogeroehrle:factored})
and inductively factored arrangements are always free
(\cite[Prop.~2.2]{jambuparis:factored}) so
that we can talk about the exponents of such arrangements.

The following theorem due to Jambu and Paris, 
\cite[Prop.~3.4, Thm.~6.1]{jambuparis:factored},
was first shown by Bj\"orner, Edelman and Ziegler
for $\CA$ supersolvable in \cite[Thm.~4.4]{bjoerneredelmanziegler}
(see also Paris \cite{paris:counting}).

\begin{theorem}
\label{thm:mult-zeta}
If $\CA$ is inductively factored, then
there is a suitable choice of a base region $B$ so that 
$\zeta(P(\CA,B), t)$ satisfies the multiplicative formula
\begin{equation}
\label{eq:poinprod2}
\zeta(P(\CA,B), t) = \prod_{i=1}^n (1 + t + \ldots + t^{e_i}),
\end{equation}
where $\{e_1, \ldots, e_n\} = \exp \CA$ is the 
set of exponents of $\CA$.
\end{theorem}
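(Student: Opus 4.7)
The plan is to induct on $|\CA|$, mirroring the recursive construction of an inductively factored arrangement via the Addition-Deletion theorem for nice arrangements \cite[Thm.~3.5]{hogeroehrle:factored} and \cite[Def.~3.8]{hogeroehrle:factored}. The base case $|\CA|=0$ is trivial: $\Phi_n$ has a single region, empty exponent multiset, and both sides of \eqref{eq:poinprod2} equal~$1$. As a sanity check of the method, any rank-$2$ arrangement is supersolvable with $\exp\CA=\{1,m-1\}$ where $m=|\CA|$, and choosing $B$ adjacent to a wall gives $\zeta(P(\CA,B),t)=1+2t+\cdots+2t^{m-1}+t^m=(1+t)(1+t+\cdots+t^{m-1})$ by a direct cyclic inspection of the $2m$ regions.

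For the inductive step, the inductively factored hypothesis supplies a hyperplane $H \in \CA$ lying in some block $\pi_k$ of a nice partition $\pi=(\pi_1,\ldots,\pi_n)$ of $\CA$ such that both $\CA':=\CA\setminus\{H\}$ and $\CA'':=\CA^H$ are inductively factored, with compatible induced nice partitions. Writing $\exp\CA=\{e_1,\ldots,e_n\}$ with $e_k=|\pi_k|$, the Addition-Deletion theorem for nice arrangements, combined with the fact that inductively factored implies free \cite[Prop.~2.2]{jambuparis:factored}, yields
\[
\exp\CA'=\{e_1,\ldots,e_k-1,\ldots,e_n\},\qquad \exp\CA''=\{e_1,\ldots,\widehat{e_k},\ldots,e_n\}.
\]
I would choose $B$ so that $H$ is one of its walls; then $B$ is contained in a unique region $B'$ of $\CA'$, and the face $B\cap H$ determines a region $B''$ of $\CA''$, providing compatible induced base regions. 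The crux is then to establish the additive recursion
\[
\zeta(P(\CA,B),t)\;=\;\zeta(P(\CA',B'),t)\;+\;t^{e_k}\,\zeta(P(\CA'',B''),t).
\]
Granting it, the inductive hypothesis applied to $\CA'$ and $\CA''$ together with the algebraic identity $(1+t+\cdots+t^{e_k-1})+t^{e_k}=1+t+\cdots+t^{e_k}$ immediately give \eqref{eq:poinprod2} for $\CA$.

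The main obstacle is precisely this recursion: it is not valid for arbitrary free triples $(\CA,\CA',\CA'')$, and its validity requires a coordinated choice of $H, B, B', B''$ reflecting the nice partition structure. Geometrically, the factor $t^{e_k}$ reflects that the regions of $\CA$ on the far side of $H$ from $B$, which are in bijection with $\RR(\CA'')$, lie beyond the entire block $\pi_k$ relative to $B$, so that crossing the block contributes a rank shift of exactly $e_k$ and not just $1$ per deleted hyperplane. Producing this simultaneous choice, proving the rank-shifted bijection, and propagating the compatibility of base regions through the induction constitute the technical heart of the argument; this is carried out for $\CA$ supersolvable in \cite[Thm.~4.4]{bjoerneredelmanziegler} (where a modular coatom plays the role of the distinguished block) and extended to the inductively factored setting by Jambu and Paris in \cite[Prop.~3.4, Thm.~6.1]{jambuparis:factored}.
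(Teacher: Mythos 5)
The paper does not prove this theorem itself: it is quoted from Bj\"orner--Edelman--Ziegler \cite[Thm.~4.4]{bjoerneredelmanziegler} for the supersolvable case and from Jambu--Paris \cite[Prop.~3.4, Thm.~6.1]{jambuparis:factored} in general, which are exactly the references to which you defer the ``technical heart'' of your argument. Your outline of the underlying induction --- addition--deletion along a nice partition, the exponent bookkeeping, and the recursion $\zeta(P(\CA,B),t)=\zeta(P(\CA',B'),t)+t^{e_k}\,\zeta(P(\CA'',B''),t)$ (which checks out, e.g., in rank $2$) --- is a faithful description of the strategy of those cited proofs, so your treatment is essentially the same as the paper's, just with more of the mechanism made explicit.
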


\subsection{Restricted root systems}
\label{ssect:restrictedroots}
Given a root system for $W$,
associated with a member $X$ from $L(\CA(W))$ we have a 
\emph{restricted root system} which consists of the restrictions
of the roots of $W$ to $X$, see \cite[\S 2]{brundangoodwin:grading}.
As in the absolute case, bases of the restricted root system correspond
bijectively to chambers of the arrangement $\CA(W)^X$,
\cite[Cor.~7]{brundangoodwin:grading}. 
More specifically, let $\Phi$ be a root system for $W$ 
and let $\Delta\subset \Phi$ 
be a set of simple roots. 
In view of Remark \ref{rems:thmmain}(i), 
choosing $X\in L(\CA(W))$ amounts to specifying the Dynkin type $T$ 
of the parabolic subgroup $W_X$, so that the pair
$(W,T)$ characterizes $A(W)^X$. 
Let $\mathcal{B}_T$ be the set of all subsets of $\Delta$ 
that generate a root system of Dynkin type $T$.
Fixing an element $\Delta_J\in \mathcal{B}_T$, 
the bases for $\Phi$ containing $\Delta_J$ are 
in bijective correspondence with 
the bases for the restricted root system, 
\cite[Thm.~10]{brundangoodwin:grading}. 

Furthermore, the set $\mathcal{B}_T$ characterizes a set of 
representatives for the action of the \emph{restricted Weyl group} 
on the set of chambers of the arrangement $\CA(W)^X$, 
\cite[Lem.~11]{brundangoodwin:grading}.
Thus there is a suitable choice of a base region $B$
such that $\zeta(P(\CA(W)^X, B),t)$ factors according to \eqref{eq:poinprod}, 
if and only if there is 
such a choice among regions that arise from elements in $\mathcal{B}_T$. 

\section{Proof of Theorem \ref{thm:main}}
\label{sec:proof}

It is well known that 
if $W$ is of type $A$ or $B$, then 
the Coxeter arrangement $\CA(W)$ is supersolvable and 
so is every restriction thereof. 
So Theorem \ref{thm:main} follows in this 
case from Theorem \ref{thm:mult-zeta}.
Therefore, for $W$ of classical type, we only need to consider 
restrictions for $W$ of type $D$.
The restrictions  
$\CD_p^k$ for $0 \le k \le p$ 
of Coxeter arrangements
of type $D$ are given by 
the defining polynomial
\[
Q(\CD_p^k) := x_{p-k+1} \cdots x_p \prod_{1 \le i < j \le p}(x_i^2 - x_j^2),
\]
see \cite[Ex.~2.6]{jambuterao:free} (\cite[Cor.~6.86]{orlikterao:arrangements}).

In view of Theorem \ref{thm:mult-zeta}, we next recall 
the relevant parts of the classifications of the 
supersolvable and inductively factored restrictions of reflection 
arrangements from \cite{amendhogeroehrle:super} and  
\cite{moellerroehrle:nice}, respectively.
Here we focus on such $X$ in $L(\CA)$ of dimension at least $3$, 
as a restriction to a smaller dimensional member of $L(\CA)$ 
is already supersolvable.

\begin{theorem}
[{\cite[Thm.~1.3]{amendhogeroehrle:super}}]
\label{thm:super-restriction}
Let $W$ be a finite, irreducible Coxeter group 
with reflection arrangement 
$\CA = \CA(W)$ and let $X \in L(\CA)\setminus\{V\}$ 
with $\dim X \ge 3$.
Then the restricted arrangement 
$\CA^X$ is supersolvable
if and only if 
one of the following holds: 
\begin{itemize}
\item[(i)] $\CA$ is of type $A$ or of type $B$, or
\item[(ii)]  $W$ is of type $D_n$ for $n \ge 4$ and 
$\CA^X \cong \CD^k_p$, where $p = \dim X$ and $p - 1 \leq k \leq p$;
\item[(iii)] $\CA^X$ is $(E_6,A_3)$, $(E_7, D_4)$, $(E_7, A_2^2)$, or $(E_8, A_5)$.
\end{itemize}
\end{theorem}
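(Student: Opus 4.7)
The plan is to split by the Coxeter type of $W$ and exploit the fact that supersolvability is inherited by restrictions, \cite[Prop.~3.2]{stanley:super}. For $W$ of type $A_n$ or $B_n$, the reflection arrangement $\CA(W)$ is itself supersolvable: $\CA(A_n)$ is the braid arrangement, fiber-type by Fadell--Neuwirth \cite{fadellneuwirth}; $\CA(B_n)$ is fiber-type by Brieskorn \cite{brieskorn:tresses}; and fiber-type is equivalent to supersolvable by Terao \cite{terao:modular}. Thus every restriction $\CA^X$ is supersolvable, giving (i) at once.

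For $W$ of type $D_n$ with $n \ge 4$, \cite[Cor.~6.86]{orlikterao:arrangements} implies that a restriction $\CA(D_n)^X$ with $\dim X \ge 3$ is either of Coxeter type $A$ (handled above) or isomorphic to some $\CD_p^k$ with $p = \dim X$. For the forward direction of (ii), $\CD_p^p = \CA(B_p)$ is supersolvable, and for $\CD_p^{p-1}$ one would build an explicit modular maximal chain inductively: the coordinate hyperplane $x_p = 0$ can be shown to be modular, and passing to the induced substructure reduces the problem to a smaller member of the family, enabling induction on $p$. For the converse one must show that $\CD_p^k$ is non-supersolvable when $0 \le k \le p-2$, which I would attempt by analysing a carefully chosen rank-$3$ localization whose hyperplane--line incidence pattern precludes the existence of any modular coatom.

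For $W$ of exceptional type in $\{E_6, E_7, E_8, F_4, H_3, H_4\}$, the restrictions $\CA(W)^X$ with $\dim X \ge 3$ form a finite list up to isomorphism, tabulated in \cite[App.~C]{orlikterao:arrangements}. For each such restriction, supersolvability can be decided by recursively searching for a modular coatom: restricting to a coatom lowers the rank by one, and rank-$2$ arrangements are automatically supersolvable, so the recursion terminates. A \GAP{} computation over this finite list then identifies exactly the four supersolvable instances in (iii): $(E_6, A_3)$, $(E_7, D_4)$, $(E_7, A_2^2)$, and $(E_8, A_5)$.

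The main obstacle is the uniform treatment of the type-$D$ family: establishing non-supersolvability of $\CD_p^k$ for all pairs $(p,k)$ with $k \le p-2$ simultaneously, rather than by brute-force case checking in each rank. A clean strategy would be to locate, inside every such $\CD_p^k$, a restriction (or localization) isomorphic to a minimal non-supersolvable arrangement verified once in low rank, and then invoke \cite[Prop.~3.2]{stanley:super} to transport the failure upward to all $(p,k)$ with $k \le p-2$.
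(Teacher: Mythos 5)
The first thing to say is that the paper does not prove this statement at all: it is quoted verbatim as Theorem~1.3 of Amend--Hoge--R\"ohrle \cite{amendhogeroehrle:super} and used as an external ingredient, so there is no internal proof to measure your attempt against. Judged on its own, your outline reproduces the natural (and, in \cite{amendhogeroehrle:super}, essentially the actual) global strategy: types $A$ and $B$ follow from fiber-type/supersolvability of the full reflection arrangement plus heredity of supersolvability under restriction \cite[Prop.~3.2]{stanley:super}; type $D$ reduces via \cite[Cor.~6.86]{orlikterao:arrangements} to the family $\CD_p^k$; and the exceptional types are a finite computation over the tables in \cite[App.~C]{orlikterao:arrangements}. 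Part (i) is therefore genuinely complete as you state it.

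The gap is in part (ii), which is where all the mathematical content sits, and your text openly defers it. For the positive direction, exhibiting the modularity of the hyperplane $\ker(x_p)$ accomplishes nothing: hyperplanes are atoms of $L(\CA)$ and atoms are always modular, so what is actually required is a full maximal chain of modular elements (equivalently, a modular coatom whose localization is inductively seen to be supersolvable), and for $\CD_p^{p-1}$ you exhibit neither the chain nor a modularity verification. For the negative direction --- non-supersolvability of $\CD_p^k$ for $0\le k\le p-2$, which is the ``only if'' half of (ii) --- you propose locating a fixed small non-supersolvable localization or restriction inside every such $\CD_p^k$ and transporting the failure by heredity. That strategy is sound in principle (supersolvability does pass to both localizations and restrictions), but you never identify the candidate subarrangement, never verify its non-supersolvability in low rank, and never show it embeds for all $(p,k)$ with $k\le p-2$; as written this direction is a plan, not a proof. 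A smaller point: in your recursion for the exceptional cases you should \emph{localize} at a candidate modular coatom, not restrict to it --- the restriction $\CA^X$ to a coatom $X$ has rank one, whereas the localization $\CA_X$ has rank $r-1$, which is what drives the induction.
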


As noted above, every supersolvable restriction from 
Theorem \ref{thm:super-restriction}
is inductively factored.

\begin{theorem}
[{\cite[Thms.~1.5, 1.6]{moellerroehrle:nice}}]
\label{thm:nice-restriction}
Let $W$ be a finite, irreducible Coxeter group 
with reflection arrangement 
$\CA = \CA(W)$ and let $X \in L(\CA)\setminus\{V\}$ 
with $\dim X \ge 3$.
Then the restricted arrangement 
$\CA^X$ is inductively factored 
if and only if 
one of the following holds: 
\begin{itemize}
\item[(i)] $\CA^X$ is supersolvable, or 
\item[(ii)] $W$ is of type $D_n$ for $n \ge 4$ and 
$\CA^X \cong \CD^{p-2}_p$, where $p = \dim X$;
\item[(iii)] 
$\CA^X$ is one of $(E_6, A_1A_2), (E_7, A_4)$, or $(E_7, (A_1A_3)'')$.
\end{itemize}
\end{theorem}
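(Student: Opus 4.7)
The plan is to prove both directions of the equivalence, organized by Coxeter type, and making heavy use of the supersolvable classification from Theorem \ref{thm:super-restriction}. The ``if'' direction for case (i) is immediate from the fact that supersolvable arrangements are always inductively factored (\cite[Prop.~3.11]{hogeroehrle:factored}). For case (ii), I would construct an explicit inductive factorization of $\CD^{p-2}_p$ by applying the addition-deletion theorem for nice arrangements (\cite[Thm.~3.5]{hogeroehrle:factored}) to the pair $(\CD^{p-1}_p, \CD^{p-2}_p)$: identify a single hyperplane $H$ whose removal from the supersolvable arrangement $\CD^{p-1}_p$ produces $\CD^{p-2}_p$, verify that the restriction to $H$ is again supersolvable (hence inductively factored), and use these to extend the inductive factorization one step to $\CD^{p-2}_p$; induction on $p$ closes the argument. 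For case (iii), each of the three exceptional restrictions $(E_6, A_1A_2)$, $(E_7, A_4)$, $(E_7, (A_1A_3)'')$ is handled by exhibiting a nice partition together with the required inductive chain, checked in computer algebra (for instance \GAP\ together with \CHEVIE).

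For the ``only if'' direction, Theorem \ref{thm:super-restriction} combined with the observation that classical types $A$ and $B$ admit only supersolvable restrictions reduces the task to ruling out inductive factorization for the family $\CD^k_p$ with $0 \le k \le p-3$ in type $D$, and for the finite list of remaining exceptional restrictions of rank $\ge 3$ appearing in the Orlik-Terao tables \cite[App.~C,~D]{orlikterao:arrangements}. The type $D$ infinite subfamily requires a uniform obstruction: any nice partition of $\CD^k_p$ would need block sizes equal to the exponents of $\CD^k_p$ computed in \cite[Ex.~2.6]{jambuterao:free}, and I would analyze a fixed codimension-two element of $L(\CD^k_p)$ to show that no partition satisfying Terao's independence condition globally can induce a factored partition locally when $k \le p-3$. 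The remaining exceptional restrictions (roughly thirty in number) are eliminated by exhaustive computer-aided search over candidate partitions whose block-size multiset matches the exponents of $\CA^X$, checking at each level of the candidate inductive chain whether a factored partition exists.

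The principal obstacle is the negative statement for the infinite type-$D$ family, which rules out purely computational verification and demands a structural combinatorial obstruction tied to the intersection lattice of $\CD^k_p$. A secondary difficulty, though conceptually routine, is the computational cost in the exceptional cases: the number of set partitions of $|\CA^X|$ hyperplanes with prescribed block sizes grows rapidly, so effective pruning using the nice-partition conditions on small localizations is essential in practice. By comparison, the positive direction is largely mechanical once the right hyperplanes for the inductive step are identified.
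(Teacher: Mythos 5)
First, a point of reference: the paper does not prove Theorem \ref{thm:nice-restriction} at all --- it is imported verbatim from \cite[Thms.~1.5, 1.6]{moellerroehrle:nice} (listed as ``to appear''), so there is no internal proof to compare your argument against. Judging your outline on its own terms: the overall architecture (positive cases via explicit nice partitions and the Addition--Deletion machinery, negative cases via a local obstruction for the infinite type-$D$ family plus finite computer checks against the tables in \cite[App.~C, D]{orlikterao:arrangements}) is the natural one and matches how such classifications are usually carried out. The reduction via Theorem \ref{thm:super-restriction} to the family $\CD_p^k$ with $k\le p-3$ and a finite list of exceptional restrictions is also correct.

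There is, however, a concrete gap in your treatment of case (ii). You propose to obtain $\CD_p^{p-2}$ from the supersolvable arrangement $\CD_p^{p-1}$ by deleting the single coordinate hyperplane that distinguishes them and then to invoke \cite[Thm.~3.5]{hogeroehrle:factored}. The deletion direction of that theorem can at best certify that $\CD_p^{p-2}$ is \emph{nice} (factored); it does not certify that it is \emph{inductively} factored, which is what the theorem asserts. By \cite[Def.~3.8]{hogeroehrle:factored} (following \cite{jambuparis:factored}), inductive factoredness is a recursive property built up from the empty arrangement: to establish it for $\CD_p^{p-2}$ you must exhibit a hyperplane $H \in \CD_p^{p-2}$ such that the deletion $\CD_p^{p-2}\setminus\{H\}$ and the restriction $(\CD_p^{p-2})^H$ are themselves inductively factored with compatible partitions. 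Knowing that the \emph{larger} arrangement $\CD_p^{p-1}$ is inductively factored says nothing a priori about its deletion (just as deletions of free arrangements need not be free), so your step runs the induction in the wrong direction. The argument has to be carried out inside $\CD_p^{p-2}$ itself, and the choice of $H$ is delicate: removing a coordinate hyperplane from $\CD_p^{p-2}$ lands you at $\CD_p^{p-3}$, which is precisely one of the arrangements you must show is \emph{not} inductively factored, so one is forced to work with hyperplanes of the form $\ker(x_i\pm x_j)$ and to identify the resulting deletions and restrictions within the two-parameter family. The second soft spot is the negative statement for $\CD_p^k$, $k\le p-3$: you correctly identify it as the principal obstacle, but the proposed localization-at-a-rank-two-flat obstruction is only a hope at this stage, not an argument, and this is exactly where the real work of the cited classification lies.
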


It follows from Theorem \ref{thm:mult-zeta} that in all instances
covered in  Theorem \ref{thm:nice-restriction}, 
$\zeta(P(\CA,B), t)$ satisfies the factorization property of 
\eqref{eq:poinprod2} with respect to a suitable choice of 
base region $B$.
In particular, Theorem \ref{thm:main} holds in all these instances.

It is not apparent that the rank-generating function 
of the poset of regions of $\CD_p^k$ factors 
according to \eqref{eq:poinprod} for $1\leq k \leq p-3$.
For, these arrangements are neither reflection arrangements 
nor are they inductively factored, by the results above. 
To show that 
the factorization property from \eqref{eq:poinprod} 
also holds in these instances, 
we first parameterize the regions $\RR(\CD_p^k)$ suitably 
and then prove a recursive formula for $\zeta(P(\CD_p^k,B),t)$.

\begin{remark}
\label{rem:regionsDlk}
Since the inequalities given by the hyperplanes 
do not change within a region, 
the set of regions is uniquely determined 
by specifying one interior point for each region. Let 
$$M_p^k:= \left\{(x_1,\dots,x_p)\in \{\pm 1,\dots,\pm p\}^p 
\mid x_1,\dots,x_{p-k}\neq -1,\hspace{0.4em} |x_i| \neq |x_j| 
\hspace{0.4em} \forall i\neq j  \right\}.$$
It is easy to verify that each region in 
$\RR := \RR(\CD_p^k)$ contains 
exactly one element of $M_p^k$. 
So this gives a parametrization for the regions in $\RR$. 
Without further comment, we frequently identify points in $M_p^k$ with their 
respective regions in  $\RR$.
For $x\in M_p^k$, write $R_x\in \RR$ for the unique region containing $x$.
Once a base region $B$ in $\RR$ 
is chosen so that $\RR$ becomes a ranked poset,  
we may write
$$\zeta(P(\CD_p^k,B),t) = \sum\limits_{x\in M_p^k} t^{\rk(R_x)}. $$

Using this notation it is easy to see which regions are 
adjacent and which hyperplanes are walls of a given region. 
Let $x=(x_1,\dots,x_p)\in M_p^k$. 
If $x_j = x_i \pm 1$, 
then $\ker(x_i-x_j)$ is a wall of $R_x$ 
and the corresponding adjacent region 
is obtained from $x$ by exchanging 
$x_i$ and $x_j$ in $x$. If $x_j = -(x_i\pm 1)$, 
then $\ker(x_i+x_j)$ is a wall of $R_x$ 
and the adjacent region again originates 
from $x$ by exchanging 
$x_i$ and $x_j$ \emph{but maintaining their respective signs}.
Finally, if $x_i=\pm 1$ and $p-k<i\leq p$, then $\ker(x_i)$ 
is a wall of $R_x$ and the adjacent region
is obtained by exchanging $x_i$ with $-x_i$.
\end{remark}

For our subsequent results, 
we choose $B_p := R_y\in \RR$ for $y=(p,p-1,\dots,1)$ 
as our base chamber independent of $k$. 

\begin{lemma}
\label{lem:dnhilf}
Let $p\geq 3$, $k\in\{0,\dots,p\}$ and $B_p\in \RR$ as above. 
For an arbitrary $i\in\{1,\dots,p\}$, we have
\begin{equation}
\label{eq:p}
\sum_{\substack{x\in M_p^k \\ x_i = p }} t^{\rk(R_x)} = 
\begin{cases}
t^{i-1} \cdot \zeta(P(\CD_{p-1}^{k  },B_{p-1}),t) &\text{ if } i\leq p-k, \\
t^{i-1} \cdot \zeta(P(\CD_{p-1}^{k-1},B_{p-1}),t) &\text{ if } i > p-k,
\end{cases}\\
\end{equation}
and
\begin{equation}
\label{eq:-p}
\sum\limits_{\substack{x\in M_p^k \\ x_i = -p }} t^{\rk(R_x)} = 
\begin{cases}
t^{2p-i-1} \cdot \zeta(P(\CD_{p-1}^{k},B_{p-1}),t)   &\text{ if } i\leq p-k, \\
t^{2p - i} \cdot \zeta(P(\CD_{p-1}^{k-1},B_{p-1}),t) &\text{ if } i > p-k.
\end{cases}
\end{equation}
\end{lemma}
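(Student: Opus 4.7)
My plan is to exhibit a bijection between $\{x \in M_p^k : x_i = \pm p\}$ and $M_{p-1}^{k'}$ (where $k' = k$ if $i \le p-k$, and $k' = k-1$ otherwise) via the coordinate-deletion map $x = (x_1,\dots,x_p) \mapsto \tilde x = (x_1,\dots,\widehat{x_i},\dots,x_p)$, and then to show that $\rk(R_x) = c(i) + \rk(R_{\tilde x})$ for an explicit constant $c(i)$ depending only on $i$ and the sign of $x_i$. That $\tilde x$ lies in $M_{p-1}^{k'}$ follows because $x_i = \pm p$ forces $|x_j| \le p-1$ for all $j \neq i$, so $\tilde x$ takes values in $\{\pm 1,\dots,\pm(p-1)\}$; tracking the ``no $-1$'' constraint on the first $p-k$ positions across the deletion yields the stated value of $k'$.

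Next I would split the hyperplanes of $\CD_p^k$ into those whose defining form involves $x_i$ and those which do not. After relabelling $\{1,\dots,p\}\setminus\{i\}$ as $\{1,\dots,p-1\}$, the non-$i$-involving sub-arrangement is precisely $\CD_{p-1}^{k'}$. The point obtained from $B_p = (p,p-1,\dots,1)$ by deleting its $i$-th entry is a strictly positive, strictly decreasing sequence of length $p-1$, and so is $B_{p-1} = (p-1,\dots,1)$; both therefore evaluate to the same sign on every defining form of $\CD_{p-1}^{k'}$ and so lie in a common region. Consequently the number of non-$i$-involving hyperplanes separating $B_p$ from $R_x$ equals $\rk(R_{\tilde x})$ computed in $\CD_{p-1}^{k'}$ relative to $B_{p-1}$.

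It then remains to enumerate the $i$-involving separators, which is a direct sign calculation using that the $i$-th coordinate of $B_p$ equals $p-i+1>0$ and $|x_j|<p$ for $j\neq i$. For $x_i = p$, only the forms $x_i - x_j$ with $j < i$ change sign between $B_p$ and $R_x$ (the forms $x_i + x_j$ and $x_i$ stay positive at both points), giving $c(i) = i-1$. For $x_i = -p$, the form $x_i - x_j$ changes sign exactly when $j > i$ (contributing $p-i$), the form $x_i + x_j$ changes sign for every $j \neq i$ (contributing $p-1$, since $p + x_j > 0$ and $-p + x_j < 0$), and $\ker(x_i)$ changes sign precisely when it is in fact a hyperplane of $\CD_p^k$, i.e.\ when $i > p-k$ (contributing $0$ or $1$); the sums are $2p-i-1$ and $2p-i$ respectively. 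Summing $t^{c(i)+\rk(R_{\tilde x})}$ over the fibres of $x \mapsto \tilde x$ then yields \eqref{eq:p} and \eqref{eq:-p}.

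The hard part will be the sign bookkeeping in the last step, especially keeping straight when the hyperplane $\ker(x_i)$ exists and whether it counts as a separator in the $x_i = -p$ case; beyond this, the argument is a direct unwinding of the parametrization of regions recorded in Remark \ref{rem:regionsDlk}.
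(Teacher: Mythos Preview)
Your proposal is correct and follows essentially the same strategy as the paper's proof: both arguments rest on the coordinate-deletion bijection from $\{x\in M_p^k : x_i=\pm p\}$ to $M_{p-1}^{k'}$ and on splitting the separating hyperplanes into those involving $x_i$ and those that do not. The paper packages the split by introducing an auxiliary region $R_z$ (with $z$ agreeing with $B_p$ outside coordinate $i$ and $z_i=\pm p$) so that $\CS(B_p,R_x)=\CS(B_p,R_z)\,\dot\cup\,\CS(R_z,R_x)$, whereas you carry out the same decomposition directly by a sign analysis on each defining form; the resulting counts $i-1$, $2p-i-1$, $2p-i$ and the identification of the residual poset with $P(\CD_{p-1}^{k'},B_{p-1})$ coincide.
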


\begin{proof}
Set $N^- := \{x\in M_p^k\mid x_i = -p\}$.
Thanks to Remark \ref{rem:regionsDlk}, 
no hyperplane involving the coordinate $x_i$ 
lies between any two regions of $N^-$. 
Setting
\[
z = (z_1,\dots,z_i,\dots,z_p) := 
(p-1,p-2, \ldots,p-i+1, -p,p-i-1, \ldots,2, 1)\in N^-,
\] 
there are only hyperplanes involving $x_i$ between $B_p$ and $R_z$. 
More precisely,  we have
\[
\CS(B_p,R_z)=
\begin{cases}
\{\ker(x_i - x_j)\mid j\leq i\}  \cup \{\ker( x_i \pm x_j )\mid i<j \leq p) \} &\text{ for } i\leq p-k, \\
\{\ker(x_i - x_j)\mid j\leq i\}  \cup \{\ker( x_i \pm x_j )\mid i<j \leq p) \} \cup \{\ker(x_i)\} &\text{ for } i>p-k.
\end{cases}
\] 
So if we choose an arbitrary $x\in N^-$, we have
$$\CS(B_p,R_x) = \CS(B_p,R_z) \mathbin{\dot\cup} \CS(R_z,R_x).$$
Consequently, we obtain
\begin{equation}
\label{eq:rk}
\rk(R_x) = |\CS(B_p,R_z)| + |\CS(R_z,R_x)| = 
\begin{cases}
t^{2p-i-1} + |\CS(R_z,R_x)|  &\text{ for } i\leq p-k, \\
t^{2p - i} + |\CS(R_z,R_x)|  &\text{ for } i>p-k.
\end{cases}
\end{equation}
Now set
\begin{equation}
\label{eq:A}
\CA := 
\begin{cases}
\CD_{p-1}^{k}   &\text{ if } i\leq p-k, \\
\CD_{p-1}^{k-1} &\text{ if } i > p-k,
\end{cases}
\end{equation}
and identify the set of regions $\RR(\CA)$ 
of $\CA$ with the corresponding set of 
$(p-1)$-tuples as in Remark \ref{rem:regionsDlk}. 
Then simply omitting the $i$-th coordinate defines a map
\[
h:N^- \longrightarrow \RR(\CA)
\]
which is bijective, $h(R_z)=B_{p-1}$ 
and if $\widetilde\rk$ denotes the rank function on $P(\CA,B_{p-1})$, 
then we get $|\CS(R_z,R_x)| = \widetilde\rk(h(R_x))$.
Therefore, by \eqref{eq:rk}, \eqref{eq:A}
and the bijectivity of $h$, we get
\begin{align*}
\sum\limits_{\substack{x\in M_p^k \\ x_i = -p }} t^{\rk(R_x)} 
&= t^{|\CS(B_p,R_z)|} \sum\limits_{x\in N^-} t^{|\CS(R_z,R_x)|} \\
&= t^{|\CS(B_p,R_z)|} \sum\limits_{x\in N^-} t^{\widetilde\rk(h(R_x))} \\
&= t^{|\CS(B_p,R_z)|} \sum\limits_{x\in \RR(\CA)} t^{\widetilde\rk(R_x)} \\
&= t^{|\CS(B_p,R_z)|} \zeta(P(\CA,B_{p-1}),t) \\
& =
\begin{cases}
t^{2p-i-1} \cdot \zeta(P(\CD_{p-1}^{k},B_{p-1}),t)   &\text{ if } i\leq p-k, \\
t^{2p - i} \cdot \zeta(P(\CD_{p-1}^{k-1},B_{p-1}),t) &\text{ if } i > p-k.
\end{cases}
\end{align*}
So \eqref{eq:p} follows.

Next let $N^+:=\{x\in M_p^k\mid x_i = p\}$ and set 
\[
z = (z_1,\dots,z_i,\dots,z_p) := 
(p-1,p-2, \ldots,p-i+1,p,p-i-1, \ldots,2, 1)\in N^+.
\]
Then $\CS(B_p,R_z)=\{\ker(x_i - x_j) \mid 1\leq j < i\}$ has cardinality $i-1$. The proof of this case is similar to the one above, 
and is left to the reader.
So \eqref{eq:-p} follows.
\end{proof}

The next technical lemma is needed in the proof of Lemma \ref{lem:dn}.
For ease of notation, we set
\[
F(e_1,\dots,e_m) := \prod_{i=1}^{m} (1+t+\dots+t^{e_i}) \in \BBZ[t] 
\]
for any $m\geq 1$ and integers $e_1,\dots,e_m \geq 1$.
In particular, $F(e) = 1+t+\dots+t^e$.
Also note that for $j>0$, we have 
\begin{equation}
\label{eq:F}
F(j-1)(1+t^j) = F(2j-1).
\end{equation} 

\begin{lemma}
\label{lem:delta}
Let $p\geq 3$ and $0\leq k\leq p$. Define
\[
\Delta_p^k := 
\sum_{i=1}^{p-k} (t^{i-1}+t^{2p-i-1}) F(p+k-2) 
+ \sum_{i=p-k+1}^{p} (t^{i-1}+t^{2p-i}) F(p+k-3).
\]
Then
\[
\Delta_p^k
= F(p+k-1,2p-3).
\]
\end{lemma}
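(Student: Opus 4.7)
The plan is to reduce the identity to a short polynomial computation by evaluating the inner $i$-summations and then clearing denominators.

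First I would evaluate the two inner sums as geometric series. A direct computation gives
\begin{align*}
\sum_{i=1}^{p-k}\bigl(t^{i-1} + t^{2p-i-1}\bigr) &= F(p-k-1)\bigl(1+t^{p+k-1}\bigr), \\
\sum_{i=p-k+1}^{p}\bigl(t^{i-1} + t^{2p-i}\bigr) &= t^{p-k}F(k-1)\bigl(1+t^k\bigr).
\end{align*}
Pulling out the common factors $F(p+k-2)$ and $F(p+k-3)$ from the two groups of summands in $\Delta_p^k$ yields the more compact expression
\[
\Delta_p^k = F(p+k-2)F(p-k-1)\bigl(1+t^{p+k-1}\bigr) + t^{p-k}F(p+k-3)F(k-1)\bigl(1+t^k\bigr).
\]

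Next I would clear denominators in the target equality $\Delta_p^k = F(p+k-1)F(2p-3)$ using the identity $F(e)(1-t) = 1 - t^{e+1}$. Multiplying through by $(1-t)^2$, the right-hand side becomes $(1-t^{p+k})(1-t^{2p-2})$, while the left-hand side, after the simplifications $(1-t^{p+k-1})(1+t^{p+k-1}) = 1-t^{2p+2k-2}$ and $(1-t^k)(1+t^k) = 1-t^{2k}$, becomes
\[
(1-t^{p-k})(1-t^{2p+2k-2}) + t^{p-k}(1-t^{p+k-2})(1-t^{2k}).
\]
Routine expansion of both expressions produces the same four-term polynomial $1 - t^{p+k} - t^{2p-2} + t^{3p+k-2}$, which establishes the equality in $\BBZ[t]$, since $(1-t)^2$ is not a zero divisor.

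The main obstacle is purely bookkeeping: keeping straight the various index shifts ($p\pm k$, $p\pm k-1$, $p+k-2$, $p+k-3$, $2p-3$, $2p+2k-3$, and so on) that appear across the computation. The boundary cases $k=0$ and $k=p$ (where one of the two summation ranges is empty) require no separate argument, since the geometric-sum formulas above yield the correct value (namely, zero for the vanishing sum) automatically, and the resulting expression for $\Delta_p^k$ still collapses to $F(p+k-1,2p-3)$.
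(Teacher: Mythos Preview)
Your proof is correct and takes a genuinely different route from the paper's. The paper argues by induction on $k$: it verifies the formula for $k=0$ directly and then shows $\Delta_p^k - \Delta_p^{k-1} = t^{p+k-1}F(2p-3)$ via a term-by-term comparison, which involves tracking several shifted partial sums. You instead close up both $i$-summations into explicit products of $F$-factors, then reduce the identity to a polynomial equality by multiplying through by $(1-t)^2$ and using $F(e)(1-t)=1-t^{e+1}$. This yields a clean four-term cancellation and treats all $k$ at once, including the endpoints $k=0$ and $k=p$, once one adopts the natural convention $F(-1)=0$ for the empty sum (note that the paper only defines $F(e)$ for $e\ge 1$, so you should state this convention explicitly). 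The inductive proof stays entirely within $\BBZ[t]$ and avoids any ``clearing denominators'' manoeuvre, but at the cost of a longer and more delicate bookkeeping step; your argument is shorter and more transparent, at the small price of importing a standard geometric-series identity and a nonstandard value of $F$.
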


\begin{proof}
We argue by induction on $k$. First let $k=0$. Then, using \eqref{eq:F}, we have
\begin{align*}
\Delta_p^0 &= \sum_{i=1}^{p} (t^{i-1}+t^{2p-i-1}) F(p-2) \\
&= (1+\dots+t^{p-1})F(p-2) + (t^{p-1}+\dots+t^{2p-2}) F(p-2) \\
&= F(p-1,p-2) + t^{p-1}F(p-1,p-2) \\
&= F(p-1,p-2)\left(1+t^{p-1}\right) \\
&= F(p-1,2p-3).
\end{align*}
Now let $k>0$ and assume that the statement is true for $k'<k$. Then using
the inductive hypothesis, we get
\begin{align*}
\Delta_p^k &= \Delta_p^{k-1} + t^{p+k-2}\sum_{i=1}^{p-k}\left(t^{i-1}+t^{2p-i-1}\right)
+ t^{p+k-3}\sum_{i=p-k+1}^{p}\left(t^{i-1}+t^{2p-i}\right) \\
&\hspace{1em} - F(p+k-3)\left(t^{p-k}+t^{p+k-2}\right) 
+ F(p+k-4)\left(t^{p-k}+t^{p+k-1}\right) \\
&= F(2p-3,p+k-2)+(t^{p+k-2}+\dots+t^{2p-3}+t^{2p+2k-3}+\dots+t^{3p+k-4}) \\
&\hspace{1em} + (t^{2p-3}+\dots+t^{2p+k-4}+t^{2p+k-3}+\dots+t^{2p+2k-4}) - (t^{2p-3}+t^{p+k-2}) \\
&= F(2p-3,p+k-2) + t^{p+k-1}(1+\dots+t^{2p-3}) \\
&= F(2p-3)(F(p+k-2)+t^{p+k-1}) \\
& = F(2p-3,p+k-1),
\end{align*}
as claimed.
\end{proof}

Finally, armed with Lemmas \ref{lem:dnhilf} and \ref{lem:delta}, we are able to 
prove the desired result for the arrangements $\CD_p^k$.

\begin{lemma}
\label{lem:dn}
The rank-generating function
of the poset of regions of $\CD_p^k$ factors according to 
\eqref{eq:poinprod} for all $1 \le k \le p-3$ and $p \ge 4$.
\end{lemma}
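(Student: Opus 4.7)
The plan is to induct on $p \ge 4$, proving the factorization simultaneously for all $k \in \{1, \ldots, p-3\}$. First I would partition $M_p^k$ according to the unique index $i$ with $|x_i|=p$ and the sign of $x_i$. Applying Lemma \ref{lem:dnhilf} to each of the four resulting families of contributions evaluates them and yields the recursion
\begin{equation*}
\zeta(P(\CD_p^k,B_p),t) = \zeta(P(\CD_{p-1}^k,B_{p-1}),t)\cdot A(p,k) + \zeta(P(\CD_{p-1}^{k-1},B_{p-1}),t)\cdot B(p,k),
\end{equation*}
where $A(p,k) := \sum_{i=1}^{p-k}(t^{i-1}+t^{2p-i-1})$ and $B(p,k) := \sum_{i=p-k+1}^{p}(t^{i-1}+t^{2p-i})$ are precisely the polynomials appearing as coefficients in the definition of $\Delta_p^k$.

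The next step is to verify that the target factorization $\zeta(P(\CD_{p-1}^j,B_{p-1}),t) = F(1,3,\ldots,2p-5,p+j-2)$ holds for every $j \in \{0,1,\ldots,p-3\}$, which is exactly the range required to handle both $j=k-1$ and $j=k$ for $1 \le k \le p-3$. This splits into three regimes. When $j=0$, the arrangement $\CD_{p-1}^0$ is the reflection arrangement of type $D_{p-1}$ and the factorization is Solomon's formula \eqref{eq:solomon}. When $1 \le j \le p-4$, the inductive hypothesis applies at level $p-1$. When $j = p-3 = (p-1)-2$, Theorem \ref{thm:nice-restriction}(ii) shows that $\CD_{p-1}^{p-3}$ is inductively factored, so the factorization follows from Theorem \ref{thm:mult-zeta}. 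The base case $p = 4$, $k = 1$ uses only the $j = 0$ and $j = p-3 = 1$ regimes and therefore requires no inductive appeal.

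To close the induction I would set $G := F(1,3,\ldots,2p-5)$ and substitute $\zeta(P(\CD_{p-1}^k,B_{p-1}),t) = G \cdot F(p+k-2)$ and $\zeta(P(\CD_{p-1}^{k-1},B_{p-1}),t) = G \cdot F(p+k-3)$ into the recursion. Factoring $G$ out, the remaining bracket is precisely $\Delta_p^k$, and Lemma \ref{lem:delta} evaluates it to $F(p+k-1,2p-3)$. Multiplying back gives
\begin{equation*}
\zeta(P(\CD_p^k,B_p),t) = F(1,3,\ldots,2p-5)\cdot F(2p-3)\cdot F(p+k-1) = F(1,3,\ldots,2p-3,p+k-1),
\end{equation*}
which matches the exponents $1,3,\ldots,2p-3,p+k-1$ of $\CD_p^k$ recorded in \cite[Ex.~2.6]{jambuterao:free}.

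The main obstacle is the algebraic identity packaged in Lemma \ref{lem:delta}, which is where the delicate cancellation between the two branches of the recursion collapses to a clean product; once that identity is granted, the rest is essentially bookkeeping. A minor but necessary check is that the pair $\{k-1,k\}$ lies inside $\{0,\ldots,p-3\}$ whenever $1 \le k \le p-3$, so that the three regimes above genuinely cover every value of $j$ demanded by the recursion.
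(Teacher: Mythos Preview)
Your proposal is correct and follows the same route as the paper: partition $M_p^k$ according to the position and sign of the coordinate of absolute value $p$, apply Lemma~\ref{lem:dnhilf}, factor out $F\bigl(\exp(\CD_{p-2}^{p-2})\bigr)=F(1,3,\dots,2p-5)$, and then invoke Lemma~\ref{lem:delta}. The only differences are cosmetic: you induct on $p$ (treating all admissible $k$ simultaneously) whereas the paper inducts on $p+k$, and you spell out explicitly how the boundary values $j=0$ (via Solomon's formula~\eqref{eq:solomon}) and $j=(p-1)-2$ (via Theorems~\ref{thm:nice-restriction}(ii) and~\ref{thm:mult-zeta}) are handled, whereas the paper folds these into the phrase ``the inductive hypothesis'' without further comment.
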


\begin{proof}
We argue by induction on $n=p+k$. For $n=3$, the result holds vacuously. 
So let $1 \le k \le p-3$ and $p \ge 4$ and assume that for all $p'$, $k'$, 
with $1 \le k' \le p'-3$, $p' \ge 4$ and $n > p'+k'$,  
the arrangement $\CD_{p'}^{k'}$ satisfies \eqref{eq:poinprod}. 
Note that 
\begin{equation}
\label{eq:expD}
\exp(\CD_p^k) = \exp(\CD_{p-1}^{p-1}) \cup \{p+k-1\},
\end{equation}
see \cite[Ex.~2.6]{jambuterao:free}.
Then the inductive hypothesis together 
with Lemmas \ref{lem:dnhilf} and \ref{lem:delta} 
and \eqref{eq:expD} imply
\begin{align*}
&\zeta(P(\CD_p^k,B_p),t)
=\sum_{x\in M_p^k} t^{\rk(R_x)} 
=\sum_{i=1}^{p} \sum_{\substack{x\in M_p^k \\ x_i = \pm p}} t^{\rk(R_x)} \\
&=\sum_{i=1}^{p-k} (t^{i-1}+t^{2p-i-1}) \zeta(P(\CD_{p-1}^{k},B_{p-1}),t) 
+ \sum_{i=p-k+1}^{p} (t^{i-1}+t^{2p-i}) \zeta(P(\CD_{p-1}^{k-1},B_{p-1}),t) \\
&= \sum_{i=1}^{p-k} (t^{i-1}+t^{2p-i-1}) F\left(\exp(\CD_{p-1}^{k}) \right)
+ \sum_{i=p-k+1}^{p} (t^{i-1}+t^{2p-i}) F\left( \exp(\CD_{p-1}^{k-1}) \right) \\
&= F\left(\exp(\CD_{p-2}^{p-2})\right) \left( \sum_{i=1}^{p-k} (t^{i-1}+t^{2p-i-1}) F(p+k-2) 
+ \sum_{i=p-k+1}^{p} (t^{i-1}+t^{2p-i}) F(p+k-3) \right) \\
&= F\left(\exp(\CD_{p-2}^{p-2})\right)  \Delta_p^k  \\
& = F\left(\exp(\CD_{p-2}^{p-2})\right) F(2p-3,p+k-1) \\
&= F\left(\exp(\CD_{p}^{k})\right).
\end{align*}
This completes the proof of the lemma.
\end{proof}

\begin{remark}
\label{rem:exc}
In view of Theorems \ref{thm:mult-zeta}, 
\ref{thm:super-restriction} and
\ref{thm:nice-restriction}, 
Lemma \ref{lem:dn} settles all the remaining 
classical instances of Theorem \ref{thm:main}.
It follows from 
Theorems \ref{thm:super-restriction} and
\ref{thm:nice-restriction}
that there are $31$ instances for $W$ of exceptional type to 
be checked (here we take the isomorphisms of rank $3$ 
restrictions $\CA(W)^X$ into account, 
cf.~\cite[App.~D]{orlikterao:arrangements}).
We have verified that 
$\zeta(P(\CA(W)^X,B), t)$ satisfies 
the factorization property \eqref{eq:poinprod}
precisely in all the instances when $W$ is of exceptional type, as
specified in Theorem \ref{thm:main}.
In the listed exceptions, $\zeta(P(\CA(W)^X,B), t)$ does not factor 
according to this rule with respect to any choice of base region.
This was checked using the computer algebra package \Sage, \cite{sage}.

We used the \Sage-package 
\emph{HyperplaneArrangements} which provides
methods to compute 
$\zeta(P(\CA, B), t))$ for given $\CA$ and $B$. 
More specifically, the algorithm is initiated  
with a list containing the vector space $V$ as a polytope and 
for each hyperplane in $\CA$ splits each polytope in the 
current list into two polytopes, 
defined by a positive resp.~negative inequality, 
while discarding all empty solutions. This results in a list of chambers 
implemented as polytopes. 
After specifying a base region $B$ 
the algorithm checks for each region $R$ 
and each hyperplane $H$ whether $H$ separates $B$ from $R$.

In addition, we used the results from \cite[\S 2]{brundangoodwin:grading}, as
detailed in Section \ref{ssect:restrictedroots} 
to greatly reduce the number of chambers that have to be tested. 
This method worked for all exceptional restrictions other than 
$(E_8,A_1)$, as the latter is simply too big for \Sage\  to compute 
all its chambers at once. 
For this case we instead used the 
bijective correspondences recalled in 
\ref{ssect:restrictedroots} to compute 
the chambers directly from the elements of 
the Weyl group $W(E_8)$.
By ordering the group elements by length using 
a depth-first search algorithm implemented 
in the \Sage-package \emph{ReflectionGroup}, we were able to compute 
the chambers of the restricted arrangement ordered by rank, 
so we could conclude that the rank-generating polynomial 
of the poset of regions for the restriction
$\CA^X = (E_8,A_1)$ does not factor
according to \eqref{eq:poinprod} 
after computing only a small portion of the entire polynomial 
$\zeta(P(\CA^X, B), t))$.
\end{remark}

%%%%%%%%%%%%%%%%%%%%%%%%%%%%%%%%%%%%%%%%%%%%%%%%%%%%%%%%%%%%%%%%%%%%%%
%%%%%%%%%%%%% Acknowledgments
%%%%%%%%%%%%%%%%%%%%%%%%%%%%%%%%%%%%%%%%%%%%%%%%%%%%%%%%%%%%%%%%%%%%%%

\bigskip 
{\bf Acknowledgments}: 
We are grateful to T.~Hoge for checking that 
 the simplicial arrangement 
``$A_4(17)$'' from Gr\"unbaum's list 
coincides with the restriction $(E_8,A_2 A_3)$.
We would also like to thank C.~Stump for 
helpful discussions concerning computations in \Sage.

The research of this work was supported by 
DFG-grant RO 1072/16-1.

%%%%%%%%%%%%%%%%%%%%%%%%%%%%%%%%%%%%%%%%%%%%%%%%%%%%%%%%%%%%%%%%%%%%%%
%%%%%%%%%%%%% bibliography
%%%%%%%%%%%%%%%%%%%%%%%%%%%%%%%%%%%%%%%%%%%%%%%%%%%%%%%%%%%%%%%%%%%%%%

%\bigskip

\bibliographystyle{amsalpha}

\newcommand{\etalchar}[1]{$^{#1}$}
\providecommand{\bysame}{\leavevmode\hbox to3em{\hrulefill}\thinspace}
\providecommand{\MR}{\relax\ifhmode\unskip\space\fi MR }
% \MRhref is called by the amsart/book/proc definition of \MR.
\providecommand{\MRhref}[2]{%
  \href{http://www.ams.org/mathscinet-getitem?mr=#1}{#2} }
\providecommand{\href}[2]{#2}

%%%%%%%%%%%%%%%%%%%%%%%%%%%%%%%%%%%%%%%%%%%%%%%%%%%%%%%%%%%%%%%%%%%%%%
%%%%%%%%%%%%%%%%%%%%%%%%%%%%%%%%%%%%%%%%%%%%%%%%%%%%%%%%%%%%%%%%%%%%%%

\end{document}